\documentclass[reqno]{amsart}
\usepackage{amssymb,amscd,verbatim,graphicx,mathtools}
\usepackage[mathscr]{euscript}
\newtheorem{thm}{Theorem}[section]

\newtheorem{lem}[thm]{Lemma}

\theoremstyle{definition}
\newtheorem{defn}[thm]{Definition}

\numberwithin{equation}{section}


\newcommand{\bb}[1]{{\mathbb{#1}}}
\newcommand{\mc}[1]{{\mathcal{#1}}}

\newcommand{\e}{\operatorname{e}}
\newcommand{\bm}{\operatorname{m}}

\newcommand{\tp}{\tilde p}
\newcommand{\tq}{\tilde q}
\newcommand{\tr}{\tilde r}
\newcommand{\ts}{\tilde s}
\newcommand{\tu}{\tilde u}
\newcommand{\tv}{\tilde v}
\newcommand{\tV}{\tilde V}

\newcommand{\iOmega}{(a,b)}

\begin{document}
\title{On Leighton's Comparison Theorem}%
\author{Ahmed Ghatasheh and Rudi Weikard}%
\address{A.G.: Department of Mathematics, University of Alabama at Birmingham, Birmingham, AL 35226-1170, USA}%
\email{ghatash@uab.edu}%
\address{R.W.: Department of Mathematics, University of Alabama at Birmingham, Birmingham, AL 35226-1170, USA}%
\email{rudi@math.uab.edu}%

\date{\today}%
\thanks{Name of \TeX{} file: \texttt{comp.tex}}%

\keywords{Comparison theorem; distributional potentials}

\begin{abstract}
We give a simple proof of a fairly flexible comparison theorem for equations of the type $-(p(u'+su))'+rp(u'+su)+qu=0$ on a finite interval where $1/p$, $r$, $s$, and $q$ are real and integrable.
Flexibility is provided by two functions which may be chosen freely (within limits) according to the situation at hand.
We illustrate this by presenting some examples and special cases which include Schr\"odinger equations with distributional potentials as well as Jacobi difference equations.
\end{abstract}

\maketitle

\section{Introduction}
In 1836 Sturm published his paper \cite{Sturm1836} containing the celebrated comparison theorem.
For it he studied two equations of the form $-(pu')'+qu=0$ to conclude something about the zeros of the solutions of one equation from the zeros of some solution of the other equation.
In fact, Sturm's theorem requires $p=\tp>0$, $\tq>q$ and the continuity of these coefficients.
Then, assuming that $-(p\tu')'+\tq\tu=0$ and $-(pu)'+qu=0$, that $a$ and $b$ are two consecutive zeros of $\tu$, and that $\tu$ and $u$ are positive in $(a,b)$, one obtains the contradiction
$$0<\int_a^b (\tq-q)\tu u=\int_a^b (up\tu'-pu'\tu)' = u p\tu'\bigg|^b_a  \leq 0.$$
It follows that, all else being the same, $u$ must have a zero in $(a,b)$.
The most prominent application of this result is in the oscillation theorem which compares the number of zeros of solutions of the equation $-(pu')'+(q-\lambda)u=0$ for different values of $\lambda$.

Only in 1909 Picone \cite{Picone1910} was able to weaken the condition $p=\tp$.
The key was the identity
$$\left(\frac{\tu}{u}(\tp\tu'u-\tu pu')\right)'=(\tp-p)\tu^{\prime2}+(\tq-q)\tu^2+\frac{p}{u^2}(\tu u'-\tu'u)^2$$
now known as Picone's identity.
Assuming $0<p\leq\tp$ and $q<\tq$ will yield a similar result as before by a similar argument.

Another essential improvement is due to Leighton \cite{MR0140759} who recognized that it was enough to require
$$\int_a^b [(p-\tp) \tu^{\prime2}+ (q-\tq)\tu^2]<0$$
rather than pointwise inequalities.

Recently, perhaps beginning with Savchuk and Shkalikov \cite{MR1756602}, there has been an increased interest in Sturm-Liouville equations with distributional coefficients.
Eckhardt et al. \cite{MR3046408} pointed out that all these situations (and more) are covered by the equation
\begin{equation}\label{de}
-(p(u'+su))'+rp(u'+su)+qu=0
\end{equation}
where $1/p$, $r$, $s$, and $q$ are real and integrable\footnote{In the case of $1/p$ we really mean here and below that $p$ is real-valued and $1/p$ is integrable.}.
Our goal here is to obtain a generalization of Leighton's comparison theorem covering two equations of the form \eqref{de}.

Our main result is Theorem \ref{main} which, together with its proof, is contained in Section \ref{ct}.
In Section \ref{sce} we discuss several special cases and examples to illustrate the use of the main theorem.
We also provide an appendix where we gather some known results for the convenience of the reader.

There are many excellent books concerned with comparison theorems of which we only mention Swanson \cite{zbMATH03303390}.
Hinton \cite{zbMATH02247495} provides a survey of the subject's history.

Finally we note that all our integrals are Lebesgue integrals unless indicated otherwise.

\section{The comparison theorem} \label{ct}
Solutions of $-(pu')'+qu=0$ are continuously differentiable, if $q$ and $1/p$ are continuous.
It is then clear that the term $\tu/u$ appearing in Picone's identity has finite limits at $a$ and $b$ even if $u$ vanishes there (recall that $\tu$ does, too).
The case where $q$ and $1/p$ are merely integrable may be treated by changing the independent variable according to $x\mapsto t=\int_a^x 1/p$.

\begin{lem}
Suppose $1/p,q\in L^1((a,b),\bb R)$ and that $p>0$ almost everywhere.
If there is a non-trivial real-valued function $\phi$ which is absolutely continuous on $[a,b]$, vanishes at $a$ and $b$, and satisfies
\begin{equation}\label{qf1}
\int_a^b (p\phi^{\prime2}+q\phi^2)\leq0,
\end{equation}
then every real-valued solution of $-(pu')'+qu=0$ has a zero in $(a,b)$ unless it is a constant multiple of $\phi$.
The latter case cannot occur when the inequality in \eqref{qf1} is strict.
\end{lem}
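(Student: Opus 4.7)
The plan is to argue by contradiction using a Picone-type identity. Suppose $u$ is a non-trivial real-valued solution of $-(pu')'+qu=0$ with no zero in $(a,b)$; by continuity $u$ has a definite sign, say $u>0$, on $(a,b)$. I would introduce the Riccati variable $v:=pu'/u$, which is absolutely continuous on every compact subinterval of $(a,b)$ and satisfies $v'=q-v^2/p$ there. A direct differentiation then yields the identity
\[
(\phi^2 v)' = p\phi^{\prime 2}+q\phi^2-p\bigl(\phi'-\phi v/p\bigr)^2.
\]

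Next I would integrate this over $[a_n,b_n]$ with $a_n\downarrow a$, $b_n\uparrow b$, and let $n\to\infty$. Since $\phi$ is continuous and bounded on $[a,b]$ and $q\in L^1$, we have $q\phi^2\in L^1$, and the hypothesis then forces $p\phi^{\prime 2}\in L^1$ as well. Granted that the boundary terms $[\phi^2 v]_{a_n}^{b_n}$ vanish in the limit (see below), this gives
\[
0\ge \int_a^b(p\phi^{\prime 2}+q\phi^2)=\int_a^b p\bigl(\phi'-\phi v/p\bigr)^2\ge 0,
\]
so $\phi'=\phi v/p$ almost everywhere; equivalently $(\phi/u)'=0$ on $(a,b)$, giving $\phi=cu$ for a nonzero constant $c$, i.e., $u$ is a constant multiple of $\phi$. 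For the strict-inequality version, if $u$ were a constant multiple of $\phi$ then $\phi$ itself would satisfy the equation, and integration by parts using $\phi(a)=\phi(b)=0$ would yield $\int_a^b(p\phi^{\prime 2}+q\phi^2)=0$, contradicting \eqref{qf1} with strict inequality.

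The main obstacle will be controlling the boundary terms $\phi^2(b_n)v(b_n)$ and $\phi^2(a_n)v(a_n)$. At the right endpoint: if $u(b)\neq 0$, then $v$ is bounded near $b$ while $\phi(b_n)\to 0$, and the claim is immediate. If $u(b)=0$, then uniqueness of the initial value problem (valid since $1/p,q\in L^1_{\loc}$) forces $(pu')(b)\neq 0$; hence near $b$ we have $|u(x)|$ comparable to $|pu'(b)|\int_x^b 1/p$, while Cauchy--Schwarz gives
\[
\phi(b_n)^2\le\Bigl(\int_{b_n}^b p\phi^{\prime 2}\Bigr)\Bigl(\int_{b_n}^b 1/p\Bigr),
\]
which combined bounds the boundary contribution by a constant times $\int_{b_n}^b p\phi^{\prime 2}\to 0$. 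The endpoint $a$ is handled symmetrically.
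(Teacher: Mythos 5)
Your proposal is correct and follows essentially the same route as the paper: your function $\phi^2 v=p u'\phi^2/u$ is exactly the auxiliary function $g$ in the paper's proof, the identity you differentiate is the same variant of Picone's identity, and the boundary terms are controlled by the same combination of Cauchy--Schwarz with the lower bound $|u(x)|\gtrsim\int_x^b 1/p$ near a vanishing endpoint (which the paper obtains via the change of variable $t=\int_a^x 1/p$).
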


We emphasize that $\phi$ need not be a solution of a differential equation.

\begin{proof}
Note that our hypothesis \eqref{qf1} implies that $p\phi^{\prime2}$ is integrable since $q\phi^2$ is.

Let $\psi$ be a real-valued solution of $-(pu')'+qu=0$ which does not vanish in $(a,b)$.
We may assume that $\psi>0$ on $(a,b)$.

Define $g=p\psi'\phi^2/\psi$ on $(a,b)$.
We claim that $g$ has limit $0$ at both $a$ and $b$ which implies that $g$ is absolutely continuous on $[a,b]$ and hence that $\int_a^b g'=0$.
Consider the behavior of $g$ near $a$.
Our claim is obvious when $\psi(a)\neq0$, so we assume $\psi(a)=0$.
The function $k$ defined by $k(x)=\int_{a}^{x}1/p$ is absolutely continuous and strictly increasing.
Since $k'(x)=0$ only on a set of measure $0$ it follows from Lemma \ref{acinv} that $k^{-1}$ is also absolutely continuous (and strictly increasing).
Hence, by Theorem \ref{eandu} and Lemma \ref{accomp}, $\psi_0=\psi\circ k^{-1}$ and $\psi_0'=(p\psi')\circ k^{-1}$ are absolutely continuous on $[0,k(b)]$.
Since $\psi$ is not the trivial solution, $\psi_0'(0)>0$ and hence $\psi_0'(t)\geq C$ for some $C>0$ at least when $t$ is in some neighborhood of $0$.
Therefore $\psi(x)\geq C k(x)$ if $x$ is sufficiently close to $a$.
Next, the Cauchy-Schwarz inequality applied to $\phi(x)=\int_a^x p^{-1/2}p^{1/2}\phi'$ gives
$\phi(x)^2\leq k(x) \int_{a}^{x}p\phi^{\prime2}$ for all $x \in (a,b)$.
Therefore, if $x$ is sufficiently close to $a$,
$$0\leq\frac{\phi(x)^2}{\psi(x)}\leq\frac{1}{C} \int_{a}^{x}p\phi^{\prime2}$$
which tends to $0$ as $x$ tends to $a$.
Since a similar argument works at $b$ the proof of our claim is complete.

Now $g'=p\phi^{\prime2}+q\phi^2-p\psi^2(\phi/\psi)^{\prime2}$ (this is a variant of Picone's identity).
Hence
$$0\leq\int_a^b p\psi^2(\phi/\psi)^{\prime2}=\int_a^b (p\phi^{\prime2}+q\phi^2)\leq0$$
and so $\phi/\psi$ must be constant. Since $\phi$ is not trivial, this constant cannot be zero thus proving the lemma.
\end{proof}

We now extend the previous lemma to cover the general equation \eqref{de}.
We denote the antiderivatives of $s$ and $r$ which vanish at $a$ by $S$ and $R$, respectively.
\begin{lem} \label{L2}
Suppose $1/p,q,r,s\in L^1((a,b),\bb R)$ and that $p>0$ almost everywhere.
If there is a non-trivial real-valued function $\phi$ which is absolutely continuous on $[a,b]$, vanishes at $a$ and $b$, and satisfies
\begin{equation}\label{qf2}
\int_a^b \e^{S-R}(p(\phi'+s\phi)^2+q\phi^2) \leq0,
\end{equation}
then every real-valued solution of \eqref{de} has a zero in $(a,b)$ unless it is a constant multiple of $\phi$.
The latter case cannot occur when the inequality in \eqref{qf2} is strict.
\end{lem}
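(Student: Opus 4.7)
The plan is to reduce \eqref{de} to the setting of the previous lemma via an exponential change of variables, after which the result will follow directly.

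First I would set $w=\e^{S}u$ (equivalently $u=\e^{-S}w$). Since $S'=s$, the combination $u'+su$ collapses to $\e^{-S}w'$, so $p(u'+su)=p\e^{-S}w'$. Substituting into \eqref{de} and then multiplying through by the integrating factor $\e^{-R}$ (using $(\e^{-R})'=-r\e^{-R}$) makes the first-order term $rp(u'+su)$ cancel against the derivative of this factor, leaving the plain Sturm--Liouville equation
$$-(Pw')'+Qw=0,\qquad P:=\e^{-R-S}p,\quad Q:=\e^{-R-S}q.$$
Because $R$ and $S$ are continuous on the compact interval $[a,b]$, the factors $\e^{\pm(R+S)}$ are bounded above and below by positive constants, so $1/P\in L^1((a,b),\bb R)$ and $Q\in L^1((a,b),\bb R)$, and $P>0$ almost everywhere; thus the coefficients satisfy the hypotheses of the previous lemma.

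Next I would transport the test function by the same rule, setting $\tilde\phi:=\e^{S}\phi$. Then $\tilde\phi'=\e^{S}(\phi'+s\phi)$, and a one-line computation gives
$$P\tilde\phi'^{\,2}+Q\tilde\phi^{\,2}=\e^{S-R}\bigl(p(\phi'+s\phi)^{2}+q\phi^{2}\bigr).$$
Hence the integral hypothesis \eqref{qf2} is exactly hypothesis \eqref{qf1} for the transformed equation, with test function $\tilde\phi$. That $\tilde\phi$ is absolutely continuous on $[a,b]$, nontrivial, and vanishes at $a$ and $b$ follows at once from the corresponding properties of $\phi$ together with the absolute continuity and strict positivity of $\e^{S}$.

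Finally, the previous lemma applied to the transformed equation asserts that every real-valued solution $w$ of $-(Pw')'+Qw=0$ has a zero in $(a,b)$ unless $w$ is a constant multiple of $\tilde\phi$, and the exceptional case is ruled out when the inequality is strict. The map $w\mapsto u=\e^{-S}w$ is a bijection between real solutions of the two equations, preserves zeros (since $\e^{-S}>0$), and sends constant multiples of $\tilde\phi$ to constant multiples of $\phi$, so the conclusion transfers verbatim to $u$. I do not anticipate a genuine obstacle; the only point requiring care is identifying the correct substitution and integrating factor, i.e.\ seeing that the weight $\e^{S-R}$ in \eqref{qf2} arises as $\e^{S}\cdot\e^{S}\cdot\e^{-R-S}$ through the combined effect of transforming $\phi$ and the new coefficients $P,Q$.
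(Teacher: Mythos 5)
Your proof is correct and is essentially the paper's own argument: the paper makes the identical substitution, setting $p_0=p\e^{-S-R}$, $q_0=q\e^{-S-R}$, $\phi_0=\phi\e^{S}$, $\psi_0=\psi\e^{S}$, verifying the same two identities, and invoking the previous lemma. Your added remark that $\e^{\pm(R+S)}$ is bounded above and below (so the transformed coefficients remain integrable) is a small point the paper leaves implicit.
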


\begin{proof}
Assume $\psi$ solves equation \eqref{de} and that it is positive on $(a,b)$.
Define $p_0=p\e^{-S-R}$, $q_0=q\e^{-S-R}$, $\phi_0=\phi\e^{S}$ and $\psi_0=\psi\e^{S}$.
Then
$$\int_a^b \e^{S-R}(p(\phi'+s\phi)^2+q\phi^2)=\int_a^b (p_0\phi_0^{\prime2}+q_0\phi_0^2)$$
and
$$-(p(\psi'+s\psi))'+r p(\psi'+s\psi)+q\psi=[-(p_0\psi_0')'+q_0\psi_0]\e^{R}.$$
Since $\psi_0>0$ on $(a,b)$ the previous lemma shows that $\psi_0$ is a constant multiple of $\phi_0$ and hence $\psi$ a constant multiple of $\phi$.
\end{proof}

Now the question arises of how to find a function $\phi$ which satisfies \eqref{qf2}.
The idea of a comparison theorem is to look for it among the solutions of a related (but better known) equation with coefficients $(\tp,\tq,\tr,\ts)$.
In fact, we will generalize this idea by multiplying such a solution with a positive absolutely continuous function.
Any such function can be written as $\e^F$ where $F$ is absolutely continuous and real. We denote $F'$ by $f$.
Thus we set $\phi=\e^{F}\tu$ where $\tu$ satisfies
\begin{equation}\label{de1}
-(\tp(\tu'+\ts\tu))'+\tr\tp(\tu'+\ts\tu)+\tq\tu=0
\end{equation}
and $\tu(a)=\tu(b)=0$.
Condition \eqref{qf2} becomes then
\begin{equation}\label{precon}
\int_a^b e^{2F+S-R}(p(\tu'+(f+s)\tu)^2+q\tu^2)\leq0.
\end{equation}
Now multiply equation \eqref{de1} by $\e^G\tu $ where $G$ is real and absolutely continuous.
Let $g=G'$ and note that it is integrable.
Then we get after an integration by parts
\begin{multline*}
0=\int_a^b \e^G\tu [-(\tp (\tu '+\ts \tu ))'+\tr \tp (\tu '+\ts \tu )+\tq \tu ]\\
 =\int_a^b \e^G[(\tu '+g\tu ) \tp (\tu '+\ts \tu )+\tr \tp (\tu '+\ts \tu )\tu +\tq \tu ^2]\\
 =\int_a^b \e^G[\tp (\tu '+\ts \tu )^2+\tp (g+\tr -\ts )(\tu '+\ts \tu )\tu +\tq \tu ^2].
\end{multline*}
Subtracting this from \eqref{precon} we obtain the condition
\begin{equation}\label{con}
\int_a^b[A(\tu '+\ts \tu )^2+B(\tu '+\ts \tu )\tu +C\tu ^2]\leq0
\end{equation}
where
$$A=p\e^{2F+S-R}-\tp \e^G,$$
$$B=2p(f+s-\ts )\e^{2F+S-R}-\tp (g+\tr -\ts )\e^G,$$
and
$$C=(q+ p(f+s-\ts )^2)\e^{2F+S-R}-\tq \e^G.$$
In \eqref{con} we tacitly assume the integrability of the integrand.
A sufficient condition for this is the integrability of $A/\tp ^2$, $B/\tp $, and $C$.

The following result is now an immediate corollary of Lemma \ref{L2}.
\begin{thm}[The Comparison Theorem] \label{main}
Suppose $1/p,1/\tp ,q,\tq ,r,\tr ,s,\ts$ are all in $L^1((a,b),\bb R)$, that $p$ and $\tp $ are positive almost everywhere, and that the differential equation
$$-(\tp (u'+\ts u))'+\tr \tp (u'+\ts u)+\tq u=0$$
has a non-trivial real solution $\tu $ which vanishes at $a$ and $b$ and satisfies the inequality \eqref{con} for some choice of real absolutely continuous functions $F$ and $G$.
Then every real solution of
$$-(p(u'+su))'+rp(u'+su)+qu=0$$
has a zero in $(a,b)$ unless it is a constant multiple of $\tu\e^{F}$.
The latter case cannot occur when the inequality in \eqref{con} is strict.
\end{thm}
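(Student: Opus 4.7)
The plan is to apply Lemma \ref{L2} with the test function $\phi=\e^F\tu$, so that the proof reduces to verifying the hypotheses of that lemma for this specific choice. First I would check the basic regularity: both $\e^F$ and $\tu$ are absolutely continuous and bounded on $[a,b]$, so their product $\phi$ is absolutely continuous on $[a,b]$; $\phi$ vanishes at $a$ and $b$ because $\tu$ does; and $\phi$ is non-trivial because $\e^F>0$ and $\tu$ is non-trivial.

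Next I would rewrite the integrand in \eqref{qf2} in terms of $\tu$. A direct computation gives $\phi'+s\phi=\e^F(\tu'+(f+s)\tu)$ where $f=F'$, so
\begin{equation*}
\e^{S-R}\bigl(p(\phi'+s\phi)^2+q\phi^2\bigr)=\e^{2F+S-R}\bigl(p(\tu'+(f+s)\tu)^2+q\tu^2\bigr),
\end{equation*}
and hence \eqref{qf2} for $\phi$ is precisely \eqref{precon}.

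The remaining step is to identify \eqref{precon} with the hypothesis \eqref{con}. Because $\tu$ solves \eqref{de1} and vanishes at the endpoints, multiplying the equation by $\e^G\tu$ and integrating by parts (as carried out in the display just before the theorem statement) produces the identity $\int_a^b \e^G[\tp(\tu'+\ts\tu)^2+\tp(g+\tr-\ts)(\tu'+\ts\tu)\tu+\tq\tu^2]=0$. Subtracting this zero from \eqref{precon} and rewriting $(\tu'+(f+s)\tu)^2=\bigl((\tu'+\ts\tu)+(f+s-\ts)\tu\bigr)^2$ as a perfect square groups the terms into $A(\tu'+\ts\tu)^2+B(\tu'+\ts\tu)\tu+C\tu^2$ with exactly the coefficients $A$, $B$, $C$ named in the theorem. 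Thus assumption \eqref{con} is equivalent to \eqref{qf2} for $\phi=\e^F\tu$, and Lemma \ref{L2} then yields the stated conclusion, with the exceptional case ruled out when \eqref{con} is strict.

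The only delicate point I would need to address is integrability. The integration by parts requires each summand of the resulting integrand to be Lebesgue-integrable, and the subtraction used to pass from \eqref{precon} to \eqref{con} requires the same of the rearranged pieces. The paper already flags that it suffices to assume $A/\tp^2$, $B/\tp$, and $C$ are integrable; combined with the basic integrability of $\tp(\tu'+\ts\tu)^2$ and $\tq\tu^2$ that follows from $\tu$ solving \eqref{de1}, this makes every term in the computation a genuine Lebesgue integral, so the reduction to Lemma \ref{L2} is fully justified. Apart from this bookkeeping, the proof is nothing more than a direct application of the lemma.
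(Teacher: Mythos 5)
Your proposal is correct and follows the paper's own route exactly: the authors carry out the same computation (setting $\phi=\e^F\tu$, reducing \eqref{qf2} to \eqref{precon}, and subtracting the integrated-by-parts identity obtained from multiplying \eqref{de1} by $\e^G\tu$) in the text immediately preceding the theorem, and then state the result as an immediate corollary of Lemma \ref{L2}. Your extra attention to the integrability bookkeeping matches the paper's remark that integrability of $A/\tp^2$, $B/\tp$, and $C$ suffices.
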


\section{Special cases and examples} \label{sce}
\subsection{The generalized Sturm-Picone theorem}
In analogy to $S$ and $R$ we will also use the antiderivative $\tilde S$ of $\ts$ which vanishes at $a$.
\begin{thm}
Suppose $1/p, 1/\tp, q, \tq ,s, \ts\in L^1((a,b),\bb R)$, that $0<p\leq \tp $ and $q\leq \tq $ almost everywhere, and that
$$\mu=\tp(s-\ts)\e^{-2S}$$
is non-decreasing on $[a,b]$.
If the differential equation
$$-(\tp (u'+\ts u))'+\ts \tp (u'+\ts u)+\tq u=0$$
has a non-trivial real solution $\tu $ with zeros at $a$ and $b$, then every real solution of
$$-(p(u'+su))'+sp(u'+su)+qu=0$$
has a zero in $(a,b)$ unless it is a constant multiple of $\tu\e^{\tilde S-S}$.
The latter case cannot occur when one of the inequalities $p\leq \tp $ or $q\leq \tq$ is strict on a set of positive measure or if $\mu$ is strictly increasing on some subinterval of $(a,b)$.
\end{thm}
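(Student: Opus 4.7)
The plan is to invoke Theorem~\ref{main} with the specific choices $F = \tilde S - S$ and $G = 2(\tilde S - S)$, so that $f = \ts - s$ and $g = 2(\ts - s)$. Since $r = s$ and $\tr = \ts$ in the present setting one has $R = S$, and the three coefficients from Theorem~\ref{main} simplify (using $f + s - \ts = 0$) to
$$A = (p - \tp)\e^{2(\tilde S - S)}, \qquad B = 2\tp(s - \ts)\e^{2(\tilde S - S)}, \qquad C = (q - \tq)\e^{2(\tilde S - S)}.$$
The assumptions $p \leq \tp$ and $q \leq \tq$ render the first and third pieces of the integrand in \eqref{con} non-positive immediately, and the integrability conditions $A/\tp^2, B/\tp, C \in L^1$ follow at once from $1/\tp, s, \ts, q, \tq \in L^1$ and the boundedness of $\e^{2(\tilde S - S)}$ on $[a,b]$.

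The cross term is where the monotonicity of $\mu$ enters. The idea is to introduce $v = \tu\e^{\tilde S}$, which is absolutely continuous on $[a,b]$, vanishes at both endpoints (since $\tu$ does), and satisfies $v' = (\tu'+\ts\tu)\e^{\tilde S}$. A short substitution then gives
$$B(\tu'+\ts\tu)\tu = 2\tp(s-\ts)\e^{-2S}\,vv' = \mu\,(v^2)'.$$
Because $\mu$ is non-decreasing (hence of bounded variation) and $v^2$ is absolutely continuous and vanishes at $a$ and $b$, a Lebesgue--Stieltjes integration by parts produces
$$\int_a^b \mu\,(v^2)'\,dx = -\int_a^b v^2\,d\mu \leq 0.$$
Summing the three contributions confirms \eqref{con}, and Theorem~\ref{main} then yields that every solution of the unmarked equation vanishes in $(a,b)$ unless it is a constant multiple of $\tu\e^F = \tu\e^{\tilde S - S}$.

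For the strict statement, observe that a nontrivial $\tu$ solving \eqref{de1} cannot vanish simultaneously with its quasi-derivative $\tp(\tu'+\ts\tu)$, so both $\tu$ and $\tu'+\ts\tu$ are nonzero off a small exceptional set. Consequently a strict inequality $p < \tp$ on a set of positive measure forces $\int A(\tu'+\ts\tu)^2 < 0$; likewise $q < \tq$ on a set of positive measure forces $\int C\tu^2 < 0$; and if $\mu$ is strictly increasing on some subinterval of $(a,b)$, then $d\mu$ charges an interval where $v^2 > 0$, so $-\int v^2\,d\mu < 0$. In any of these situations the left side of \eqref{con} is strictly negative, and the last clause of Theorem~\ref{main} excludes the exceptional case. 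The only real technical hurdle is the Lebesgue--Stieltjes integration by parts against the possibly singular monotone $\mu$, where the ordinary fundamental theorem of calculus does not apply; everything else is a bookkeeping exercise in the definitions of $A$, $B$, $C$.
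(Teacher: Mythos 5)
Your main argument coincides exactly with the paper's proof: the same choice $G=2F=2\tilde S-2S$ (with $r=s$, $\tr=\ts$, hence $R=S$), the same simplification $A=(p-\tp)\e^G$, $C=(q-\tq)\e^G$, the same substitution $\tv=\tu\e^{\tilde S}$ turning the cross term into $\mu(\tv^2)'$, and the same Lebesgue--Stieltjes integration by parts (the paper cites Folland, Theorem 3.36) to get $\int_a^b\mu(\tv^2)'=-\int_{[a,b)}\tv^2\,d\mu\le0$. That part is correct and is precisely what the paper does before invoking Theorem \ref{main}.

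The paper stops there; you go further and try to justify the final clause, and one step of that addendum is false. From the fact that $\tu$ and its quasi-derivative $\tp(\tu'+\ts\tu)$ cannot vanish simultaneously you infer that $\tu'+\ts\tu$ is nonzero off a small exceptional set. Only the \emph{common} zeros are excluded; $\tu'+\ts\tu$ may well vanish on a set of positive measure. For example, with $\tp=1$, $\ts=\tr=0$, a nontrivial solution of $-u''+\tq u=0$ vanishing at $a$ and $b$ can be constant on an interior subinterval on which $\tq=0$. If $p<\tp$ exactly on such a subinterval (and $q=\tq$, $s=\ts$ elsewhere), then $\int_a^b A(\tu'+\ts\tu)^2=0$, inequality \eqref{con} is an equality, and in this configuration $\tu$ itself solves the unmarked equation without vanishing in $(a,b)$ --- so the exceptional case occurs. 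Hence your argument for the $p$-clause cannot be completed as written (and the clause itself is more delicate than the statement suggests). By contrast, your justifications for strictness via $q<\tq$ (the zeros of $\tu$ are isolated, so $\{q<\tq\}\cap\{\tu\neq0\}$ has positive measure) and via $\mu$ strictly increasing on a subinterval (choose a closed subinterval avoiding the finitely many zeros of $\tv$, on which $\tv^2$ is bounded below and $d\mu$ is positive) are sound.
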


\begin{proof}
Choose $G=2F=2\tilde S-2S$ and set $r=s$ and $\tr=\ts$ in inequality \eqref{con}.
Then $A/\tp ^2$, $B/\tp $, and $C$ are integrable and we have $A=(p-\tp)\e^G\leq0$ and $C=(q-\tq)\e^G\leq0$.
Setting $\tv=\tu\e^{\tilde S}$ gives
$$B(\tu '+\ts \tu )\tu=\mu(\tv^2)'.$$
Using Theorem 3.36 of Folland \cite{MR1681462} we get therefore
$$\int_a^b B(\tu '+\ts \tu )\tu= -\int_{[a,b)} \tv^2 d\mu\leq0.$$
Now apply Theorem \ref{main}.
\end{proof}

We chose $r=s$ and $\tr=\ts$ only for simplicity. A similar result holds also in the general case.
More importantly, perhaps, it is not necessary to have $\mu$ finite at $a$ and $b$.
It suffices to assume that $\mu$ is non-decreasing on $(a,b)$ and to require
$$\lim_{x\downarrow a}\mu(x)\tu(x)^2=\lim_{x\uparrow b}\mu(x)\tu(x)^2=0.$$

\subsection{The generalized Sturm separation theorem}
The following is a slight generalization of Theorem 11.1 in Eckhardt et al. \cite{MR3046408} (who have $r=s$).
\begin{thm}
Suppose $1/p,q,r,s\in L^1((a,b),\bb R)$, that $p>0$ almost everywhere, and that the differential equation
$$-(p(u'+su))'+rp(u'+su)+qu=0$$
has real solutions $u$ and $\tu$.
If $\tu$ is non-trivial but has zeros at $a$ and $b$, then $u$ has a zero in $(a,b)$ unless it is a constant multiple of $\tu$.
\end{thm}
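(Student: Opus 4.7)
My plan is to deduce this immediately from Theorem \ref{main} by applying it with the tilded coefficients equal to the untilded ones, i.e.\ $(\tp,\tq,\tr,\ts)=(p,q,r,s)$, and by making a clever choice of the free functions $F$ and $G$ that forces the integrand in \eqref{con} to vanish identically.

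Specifically, I would choose $F\equiv 0$ (so $f\equiv 0$) and $G=S-R$ (so $g=s-r$), both of which are real absolutely continuous functions on $[a,b]$ since $S$ and $R$ are. Substituting these choices into the definitions
$$A=p\e^{2F+S-R}-\tp \e^G,\quad B=2p(f+s-\ts)\e^{2F+S-R}-\tp(g+\tr-\ts)\e^G,$$
$$C=(q+p(f+s-\ts)^2)\e^{2F+S-R}-\tq\e^G$$
together with $\tp=p$, $\tq=q$, $\ts=s$, $\tr=r$ yields
$$A=p\e^{S-R}-p\e^{S-R}=0,$$
$$B=0\cdot 2p\e^{S-R}-p(s-r+r-s)\e^{S-R}=0,$$
$$C=q\e^{S-R}-q\e^{S-R}=0.$$
Thus the left-hand side of \eqref{con} is literally zero, and the required integrability of $A/\tp^2$, $B/\tp$, and $C$ is trivial.

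Since the integral in \eqref{con} equals $0\le 0$, Theorem \ref{main} applies and tells us that every real solution of the equation has a zero in $(a,b)$ unless it is a constant multiple of $\tu\e^{F}=\tu$. This is exactly the statement to be proved. The only point one might worry about is whether the equality case really captures the conclusion as stated, but the theorem gives precisely this dichotomy, so there is no genuine obstacle; the essential content is just recognizing that $F=0$, $G=S-R$ makes the two equations indistinguishable from the viewpoint of \eqref{con}.
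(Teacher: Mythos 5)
Your proposal is correct and is essentially identical to the paper's own proof, which likewise sets $F=0$, $G=S-R$, and $(\tp,\tq,\tr,\ts)=(p,q,r,s)$ to obtain $A=B=C=0$ in \eqref{con} and then invokes Theorem \ref{main}. Your explicit verification of the three vanishing coefficients is just a spelled-out version of the paper's one-line argument.
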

\begin{proof}
Choosing $F=0$, $G=S-R$, $\tp=p$, $\tq=q$, $\tr=r$, and $\ts=s$ gives $A=B=C=0$ in \eqref{con}.
\end{proof}

\subsection{Distributional potentials}
Here we consider a Schr\"odinger equation with a distributional potential\footnote{Appendix \ref{AppB} gathers some basic facts about distributions.} to obtain a (slight) generalization of Theorem 2 of Ben Amara and Shkalikov \cite{MR2664498} or
Theorem 4.1 of Homa and Hryniv \cite{MR3162802}.
Note that, for our approach, incorporating a coefficient $p$ causes only a minor inconvenience at least when $p$ and $1/p$ are bounded.

If $u$ is in the Sobolev space $W^{1,2}((a,b))$ (i.e., $u$ is absolutely continuous on $[a,b]$ and $u'\in L^{2}((a,b))$), then $u''$ is a distribution in $W^{-1,2}((a,b))$ defined by $u''(\phi)=-\int_a^b u'\phi'$.
Moreover, if $u\in W^{1,2}((a,b))$ and if $v\in W^{-1,2}((a,b))$ has antiderivative $V\in L^{2}((a,b))$, then we may define $(vu)(\phi)=-\int_a^b V(u\phi)'$ which shows that $vu$ is also in $W^{-1,2}((a,b))$.
We may therefore pose the differential equation
$$-u''+vu=0.$$
Thus $u\in W^{1,2}((a,b))$ is a solution of this equation, if, for all test functions $\phi$,
$$0=(-u''+vu)(\phi)=\int_a^b ((u'-Vu)\phi'-Vu'\phi)=\int_a^b (u'-Vu+W)\phi'$$
where $W$ is an antiderivative of the integrable function $Vu'$ and hence absolutely continuous.
By Du Bois-Reymond's lemma \ref{dBR} $u'-Vu+W$ is constant which implies that $u'-Vu$ is absolutely continuous and hence $(u'-Vu)'=-W'=-Vu'=-V(u'-Vu)-V^2u$ almost everywhere.
It follows that $u$ satisfies equation \eqref{de} with $p=1$, $q=-V^2$ and $r=s=-V$.
Conversely, since $s=-V\in L^{2}((a,b))$, a solution of \eqref{de} is necessarily in $W^{1,2}((a,b))$ and solves $-u''+vu=0$ in the sense of distributions.

\begin{thm}
Suppose $v$ and $\tv$ are real distributions in $W^{-1,2}((a,b))$, that $\tv-v$ is a non-negative measure, and that the differential equation  $-u''+\tv u=0$ has a non-trivial real solution $\tu$ with zeros at $a$ and $b$.
Then every real solution of $-u''+vu=0$ has a zero in $(a,b)$ unless it is a constant multiple of $\tu$.
\end{thm}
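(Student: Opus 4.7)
My plan is to recast both distributional equations as instances of \eqref{de} and then invoke Theorem \ref{main}. By the discussion preceding the statement, a function $u\in W^{1,2}((a,b))$ solves $-u''+vu=0$ distributionally if and only if it solves \eqref{de} with $p=1$, $r=s=-V$, and $q=-V^2$, where $V\in L^{2}((a,b))$ is an antiderivative of $v$; analogously for $\tv$ one takes $\tp=1$, $\tr=\ts=-\tV$, and $\tq=-\tV^2$. I would invoke Theorem \ref{main} with the trivial choice $F=G=0$, so that the exceptional solution shape $\tu\e^F$ becomes $\tu$, exactly as the target statement requires.

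With this choice $S=R$ (since $s=r$), so $\e^{2F+S-R}=\e^G=1$, and a direct substitution into the formulas for $A$, $B$, $C$ yields $A=0$, $B=2(\tV-V)$, and $C=-V^2+(\tV-V)^2+\tV^2=2\tV(\tV-V)$; all three are integrable because $V,\tV\in L^2$ and $\tu$ is bounded. Using $\ts=-\tV$, the integrand of \eqref{con} collapses dramatically:
$$A(\tu'+\ts\tu)^2+B(\tu'+\ts\tu)\tu+C\tu^2=2(\tV-V)\bigl[\tu\tu'+(\ts+\tV)\tu^2\bigr]=(\tV-V)(\tu^2)'.$$

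The remaining, and main, step is to show that $\int_a^b(\tV-V)(\tu^2)'\leq0$. Since the distributional derivative of $\tV-V$ equals the non-negative Radon measure $\tv-v$, the $L^2$ class of $\tV-V$ has a non-decreasing right-continuous representative $h$ of bounded variation on $[a,b]$, with Stieltjes measure $dh=\tv-v$. I would then apply the integration-by-parts formula for BV functions against the absolutely continuous function $\tu^2$, exactly as the Sturm-Picone proof above does via Folland's Theorem 3.36. Since $\tu(a)=\tu(b)=0$, the boundary terms vanish and one is left with
$$\int_a^b(\tV-V)(\tu^2)'=-\int_{[a,b]}\tu^2\,d(\tv-v)\leq0,$$
which is the inequality \eqref{con}. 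Theorem \ref{main} then gives the claim.

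The step that needs the most care is the integration by parts: one must verify that the distributional equality $(\tV-V)'=\tv-v$ translates to a genuine identity between the Stieltjes measure $dh$ of a BV representative and the hypothesized Radon measure on $[a,b]$, and that continuity of $\tu^2$ together with $\tu(a)=\tu(b)=0$ kills the boundary contributions regardless of which one-sided convention is used for $h$ at the endpoints. Once this is settled, the rest is routine bookkeeping within the framework of Theorem \ref{main}.
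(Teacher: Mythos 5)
Your proposal is correct and follows essentially the same route as the paper: choose $F=G=0$, observe that $A=B\ts+C=0$ so the integrand of \eqref{con} reduces to $2(\tV-V)\tu\tu'=(\tV-V)(\tu^2)'$, and then integrate by parts against the non-decreasing representative of $\tV-V$ via Folland's Theorem 3.36, with the boundary terms vanishing because $\tu(a)=\tu(b)=0$. Your extra care about selecting a BV representative whose Stieltjes measure is $\tv-v$ is a point the paper passes over silently, but it is the same argument.
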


\begin{proof}
Since $\tv-v$ is a non-negative measure we have that $\mu=\tV-V$ is non-decreasing.
With $F=G=0$ we have $A=B\tilde s+C=0$ in inequality \eqref{con} which then becomes
$$\int_a^b 2(\tV-V)\tu'\tu=\int_a^b \mu(\tu^2)'=-\int_{[a,b)} \tu^2 d\mu\leq0$$
using again Theorem 3.36 of Folland \cite{MR1681462}.
\end{proof}

\subsection{Difference equations}
A comparison theorem for the Jacobi difference equation is known at least since the work of Fort \cite{MR0024567} in 1948.
However, it may be viewed as a special case of Theorem \ref{main} as we will show now.

Let $\alpha$ be a sequence of positive numbers defined on $\bb N_0$ and $\beta$ a sequence of real numbers defined on $\bb N$.
We consider the difference equation
\begin{equation}\label{dde}
\alpha_{n-1}u_{n-1}+\beta_nu_n+\alpha_nu_{n+1}=0, \quad N_0+1\leq n\leq N_1-1
\end{equation}
on a bounded interval\footnote{All closed intervals in this section are to be viewed as subsets of $\bb N_0$ while half open intervals are considered subsets of $\bb R$.} $[N_0,N_1]$ of $\bb N_0$ for which $N_1-N_0\geq2$.
One might want to write equation \eqref{dde} in terms of forward differences $u_{n+1}-u_n$.
It then reads
$$-\alpha_n(u_{n+1}-u_n)+\alpha_{n-1}(u_{n}-u_{n-1})+v_nu_n=0, \quad N_0+1\leq n\leq N_1-1$$
where $v_n=-\beta_n-\alpha_n-\alpha_{n-1}$.

A solution $u:[N_0,N_1]\to\bb R$ of \eqref{dde} may change sign without ever being zero.
We will therefore be interested in sign changes rather than zeros of solutions.
To be precise we will make the following definition.
\begin{defn}
The sequence $u:[N_0,N_1]\to\bb R$ changes sign on $[N_0,N_1]$ if there are $n,m\in[N_0,N_1]$ such that $u_nu_m<0$.
\end{defn}
If $u$ is a real non-trivial solution of \eqref{dde} and $u_n=0$ for some $n\in[N_0+1,N_1-1]$,
then $u_{n-1}$ and $u_{n+1}$ must have different signs.
In other words, if $u$ has a zero in $[N_0+1,N_1-1]$, then it changes sign on $[N_0,N_1]$.

\newcommand{\Alpha}{\tilde\alpha}
\begin{thm}
Suppose $\alpha$ and $\Alpha$ are positive sequences defined on a bounded interval $[N_0,N_1-1]$ and that $v$ and $\tv$ are real sequences on $[N_0+1,N_1-1]$.
If the difference equation
\begin{equation}\label{ddet}
-\Alpha_n(u_{n+1}-u_n)+\Alpha_{n-1}(u_{n}-u_{n-1})+\tv_nu_n=0, \quad N_0+1\leq n\leq N_1-1
\end{equation}
has a non-trivial solution $\tu:[N_0,N_1]\to\bb R$ such that $\tu_{N_0}=0$, $\tu_{N_1-1}\tu_{N_1}\leq0$, and
\begin{multline}\label{dcon}
\sum_{n=N_0+1}^{N_1-1} \big[(\alpha_{n-1}-\Alpha_{n-1})(\tu_{n}-\tu_{n-1})^2+(v_n-\tv_n) \tu_n^2\big] \\ +(\alpha_{N_1-1}-\Alpha_{N_1-1})(\tu_{N_1-1}^2-\tu_{N_1-1}\tu_{N_1})\leq0
\end{multline}
then every real solution of \eqref{dde} changes sign on $[N_0,N_1]$ unless it is a constant multiple of $\tu$.
The latter case cannot occur when the inequality in \eqref{dcon} is strict.
\end{thm}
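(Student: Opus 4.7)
Let $a = N_0$ and $b = N_1$. The plan is to realize the Jacobi difference equation \eqref{dde} as an instance of \eqref{de} on the continuous interval $(a, b)$ and to apply Lemma \ref{L2} to a piecewise-linear test function built from $\tu$. On each subinterval $(n, n+1)$ with $n$ an integer in $[N_0, N_1-1]$, put $p(x) = \alpha_n$ and $\tp(x) = \Alpha_n$; let $V$ and $\tilde V$ be the right-continuous step functions on $[a, b]$ that vanish at $a$ and have jumps $v_n$ and $\tv_n$ respectively at each integer $n \in [N_0+1, N_1-1]$. Setting $r = s = -V/p$, $q = -V^2/p$, and analogously for the tilded coefficients yields bounded, hence integrable, data. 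A computation parallel to the distributional-potential reduction in Section \ref{sce} shows that a piecewise-linear function taking values $(u_n)$ at the integers is a solution of \eqref{de} if and only if $(u_n)$ satisfies \eqref{dde}: the required continuity of $p(u'+su) = pu' - Vu$ at each interior integer $n$ translates into the recursion \eqref{ddet} at $n$.

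Construct $\phi$ as follows. If $\tu_{N_1} = 0$, let $\phi$ be the piecewise-linear interpolation of $(\tu_n)$ on $[a, b]$. Otherwise $\tu_{N_1-1}\tu_{N_1} < 0$ and the line joining $\tu_{N_1-1}$ to $\tu_{N_1}$ vanishes at a unique $x^* \in (N_1-1, N_1)$; set $\phi$ equal to the piecewise-linear interpolation of $(\tu_n)$ on $[a, x^*]$ and to $0$ on $[x^*, b]$. In either case $\phi$ is non-trivial, absolutely continuous, and vanishes at both $a$ and $b$. Because $r = s$ gives $e^{S-R} = 1$ and the choices of $s$ and $q$ provide the cancellation $p(\phi'+s\phi)^2 + q\phi^2 = p\phi'^2 - V(\phi^2)'$, the integral from \eqref{qf2} is
\[
I := \int_a^b e^{S-R}(p(\phi'+s\phi)^2 + q\phi^2)\,dx = \int_a^b p\phi'^2\,dx - \int_a^b V(\phi^2)'\,dx.
\]

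Integration by parts (Theorem 3.36 of \cite{MR1681462}, using $\phi(a) = \phi(b) = 0$) converts the $V$-term into $\sum_{n=N_0+1}^{N_1-1} v_n \tu_n^2$; a subinterval-by-subinterval evaluation of $\int p\phi'^2$ --- the truncated last piece on $[N_1-1, x^*]$ contributing $\alpha_{N_1-1}(\tu_{N_1-1}^2 - \tu_{N_1-1}\tu_{N_1})$ by direct computation using the explicit position of $x^*$ --- produces
\[
I = \sum_{n=N_0+1}^{N_1-1}\alpha_{n-1}(\tu_n - \tu_{n-1})^2 + \alpha_{N_1-1}(\tu_{N_1-1}^2 - \tu_{N_1-1}\tu_{N_1}) + \sum_{n=N_0+1}^{N_1-1} v_n \tu_n^2.
\]
The key algebraic identity is that the corresponding expression with $\alpha$ and $v$ replaced by $\Alpha$ and $\tv$ vanishes; this follows by multiplying \eqref{ddet} by $\tu_n$, summing over $n \in [N_0+1, N_1-1]$, and applying summation by parts using $\tu_{N_0} = 0$. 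Subtracting this zero from $I$ leaves precisely the left-hand side of \eqref{dcon}, which is $\le 0$ by hypothesis.

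Lemma \ref{L2} then gives that every real solution $u$ of \eqref{de} has a zero in $(a, b)$ unless $u = c\phi$ for some constant $c$. A function-level zero of $u$ at an interior integer $n$ forces $u_{n-1}$ and $u_{n+1}$ to have opposite signs via \eqref{dde}; a zero at a non-integer $x \in (n, n+1)$ gives $u_n u_{n+1} < 0$; in either case $(u_n)$ changes sign on $[N_0, N_1]$. In the exceptional case $u = c\phi$, evaluation at integers in $[N_0, N_1-1]$ gives $u_n = c\tu_n$ there, and the recursion \eqref{dde} extends this to $n = N_1$; should $\tu_{N_1} \ne 0$ and $c \ne 0$, the vanishing of $u$ on $[x^*, b]$ combined with linearity on $(N_1-1, b)$ would force $u_{N_1-1} = u_{N_1} = 0$ and then $u \equiv 0$ by backward recursion, a contradiction. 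Thus $u$ is a constant multiple of $\tu$ as sequences. When \eqref{dcon} is strict, so is $I \le 0$, and the strict clause of Lemma \ref{L2} eliminates the exception. The main obstacle is the endpoint bookkeeping on the truncated subinterval $[N_1-1, x^*]$ and the summation-by-parts cancellation behind the vanishing of the tilded analogue of $I$, which together make the continuous integral $I$ match the discrete sum in \eqref{dcon} exactly.
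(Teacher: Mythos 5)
Your proof is correct and follows essentially the same route as the paper: both realize the Jacobi equation as an instance of \eqref{de} with piecewise-constant coefficients and a piecewise-linear interpolant, and both identify the left-hand side of \eqref{dcon} with the continuous quadratic-form condition. The only differences are cosmetic --- the paper truncates the interval at the crossing point $x^*$ and invokes Theorem \ref{main} with $F=G=0$, whereas you keep $b=N_1$, extend the comparison function by zero past $x^*$, and apply Lemma \ref{L2} directly, verifying its hypothesis by hand via the discrete summation-by-parts identity that underlies \eqref{con}.
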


\begin{proof}
We define each of $\tp$, $\tq$, and $\ts=\tr$ on the real interval $[N_0,N_1)$ to be piecewise constant,
specifically $\tp=\Alpha_n$ and $\ts=-\sum_{k=N_0+1}^n \tv_k/\Alpha_n$ on $[n,n+1)$ (in particular, $\ts=0$ on $[N_0,N_0+1)$) and $\tq=-\tp\ts^2$.
Moreover, we define $\tu$ to be continuous and piecewise linear assuming the given values at the points of $[N_0,N_1]$.
Thus, on $[n,n+1)$ we have $\tilde u(x)=\tu_n+(\tu_{n+1}-\tu_n)(x-n)$.
Since $n\mapsto\tu_n$ satisfies the difference equation \eqref{ddet} the function $x\mapsto\tu(x)$ satisfies $-(\tp(\tu'+\ts\tu))'+\ts\tp(\tu'+\ts\tu)+\tq\tu=0$.
Analogously, we define $p$, $q$ and $s=r$ with the aid of the coefficients $\alpha_n$ and $v_n$.
Now set $a=N_0$ and $b$ to be the point where the straight line segment joining the points $(N_1-1,\tu_{N_1-1})$ and $(N_1,\tu_{N_1})$ crosses the abscissa.
Then, choosing $F=G=0$, the left-hand side of inequality \eqref{con} equals the left-hand side of inequality \eqref{dcon}.
Since the solutions of the differential equation \eqref{de} and the difference equation \eqref{dde} are in one-to-one correspondence we may apply Theorem \ref{main} to obtain our conclusion.
\end{proof}

\subsection{Leighton's example}
Leighton \cite{MR0140759} discusses the following example to illustrate the use of his integral condition as compared to Picone's pointwise condition.
Here we show that a careful choice of $F$ and $G$ gives another improvement.

The function $\tu(x)=\sin(x)$ solves the equation $-u''-u=0$.
With its help we want to draw conclusions about the zeros of the solutions of $-u''+qu=0$ when $q=k-1-x$.
Using this in \eqref{con} with the choice $2F=G$ we get $A=B=0$ and $C(x)=(k-x+g(x)^2/4)\e^{G(x)}$.
With Leighton's choice $G=0$ we obtain $\int_0^\pi C(x) \sin(x)^2 dx=\pi(2k-\pi)/4$.
If $k>\pi/2$ this is positive and does not allow a conclusion.
With $G=0.6x$ and $k\leq1.672$ we obtain that $\int_0^\pi C(x) \sin(x)^2 dx$ is negative.
In this case every solution of $-u''+qu=0$ has a zero in $(0,\pi)$.
Note that for $k\geq1.676$ one can find solutions without zeros in $(0,\pi)$.

\appendix
\section{Existence and uniqueness}
The following existence and uniqueness theorem is an important ingredient of our results.
\begin{thm} \label{eandu}
Suppose $1/p$, $q$, $r$ and $s$ are real-valued and integrable on a bounded interval $(a,b)$, $x_0\in[a,b]$, and $A,B\in\bb R$.
Then there is a unique real-valued solution $u$ of the differential equation
$$-(p(u'+su))'+rp(u'+su)+qu=0$$
such that $u$ and $p(u'+su)$ are absolutely continuous on $[a,b]$ and satisfy the initial conditions $u(x_0)=A$ and $[p(u'+su)](x_0)=B$.
\end{thm}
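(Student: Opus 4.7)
The plan is to rewrite the scalar second order equation as an equivalent first order linear system and apply standard Carath\'eodory theory. Setting $y_1=u$ and $y_2=p(u'+su)$, the equation $-(p(u'+su))'+rp(u'+su)+qu=0$ becomes
$$\begin{pmatrix} y_1\\ y_2\end{pmatrix}'=M\begin{pmatrix} y_1\\ y_2\end{pmatrix},\qquad M=\begin{pmatrix} -s & 1/p\\ -q & r\end{pmatrix},$$
with initial datum $(y_1(x_0),y_2(x_0))=(A,B)$. Each entry of $M$ lies in $L^1((a,b),\bb R)$ by hypothesis, so $\|M(\cdot)\|\in L^1((a,b))$ for any fixed matrix norm. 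Conversely, any absolutely continuous $\vec y=(y_1,y_2)^T$ solving this system yields $u=y_1$ and $p(u'+su)=y_2$ absolutely continuous and satisfying the original equation, so the two problems are equivalent.

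Next I would recast the system as the Volterra integral equation
$$\vec y(x)=\begin{pmatrix}A\\ B\end{pmatrix}+\int_{x_0}^x M(t)\vec y(t)\,dt$$
and produce a solution by Picard iteration in $C([a,b],\bb R^2)$ with the supremum norm. Set $\vec y_0\equiv(A,B)^T$ and $\vec y_{n+1}(x)=(A,B)^T+\int_{x_0}^x M(t)\vec y_n(t)\,dt$. A routine induction using the $L^1$ integrability of $\|M\|$ yields
$$\|\vec y_{n+1}(x)-\vec y_n(x)\|\leq\|(A,B)\|\,\frac{1}{n!}\left|\int_{x_0}^x\|M(t)\|\,dt\right|^{n+1},$$
so the series $\sum(\vec y_{n+1}-\vec y_n)$ converges absolutely and uniformly on $[a,b]$. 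The limit $\vec y$ is continuous, satisfies the integral equation, and hence is absolutely continuous with derivative $M\vec y$ almost everywhere.

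For uniqueness, if $\vec z$ is the difference of two solutions, then $\vec z$ solves the same integral equation with zero initial datum, so $\|\vec z(x)\|\leq\bigl|\int_{x_0}^x\|M(t)\|\,\|\vec z(t)\|\,dt\bigr|$. Iterating this estimate (or appealing to a Gronwall inequality valid for $L^1$ kernels) forces $\vec z\equiv 0$. Translating back to $u$, the components $u$ and $p(u'+su)$ are absolutely continuous on $[a,b]$, the initial conditions hold at $x_0$, and uniqueness is inherited from the system.

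The only point requiring care is that $p$ itself is not assumed integrable (only $1/p$ is), but this is precisely why the appropriate quasi-derivative is $p(u'+su)$ rather than $u'$: in the system formulation the matrix $M$ involves $1/p$ rather than $p$, so $\|M\|\in L^1((a,b))$ and the Picard scheme closes without issue. Beyond this observation, the argument is standard Carath\'eodory theory for linear systems, and no deeper obstacle arises.
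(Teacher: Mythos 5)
Your proposal is correct and follows exactly the paper's route: the paper likewise reduces the equation to the first-order system $U'=MU$ for $U=(u,\,p(u'+su))^T$ with integrable coefficient matrix and invokes the standard iteration scheme, which you merely spell out via Picard iteration on the Volterra equation. One small slip: since $-(p(u'+su))'+rp(u'+su)+qu=0$ gives $(p(u'+su))'=qu+rp(u'+su)$, the $(2,1)$ entry of $M$ should be $q$ rather than $-q$; this sign does not affect the integrability of $M$ or any step of the argument.
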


Actually $p(u'+su)$ is, in general, only almost everywhere equal to an absolutely continuous function.
Similarly the equality in the differential equation may only hold almost everywhere.

The theorem follows from a standard iteration scheme since the equation is equivalent to the system $U'=MU$
where $u$ is the first component of $U$, and
$$M=\begin{pmatrix}-s&1/p \\ q&r  \end{pmatrix}$$
is integrable.

\section{Absolutely continuous functions}
Since we need one or two facts about absolutely continuous functions whose proofs do not seem readily available we provide this appendix.

While sums and products of absolutely continuous functions are again absolutely continuous, the same cannot necessarily be said for compositions.
Instead, the composition of two absolutely continuous functions is absolutely continuous if and only if it is of bounded variation (see Natanson \cite{MR0067952}, Theorem IX.3.5).
The following special cases suffice for our purposes and may be proved in a straightforward manner.

\begin{lem} \label{accomp}
Suppose $f:[\alpha,\beta]\to[a,b]$ and $g:[a,b]\to [A,B]$ are absolutely continuous, that $f$ is strictly increasing, and that $h:[A,B]\to\bb R$ is Lipschitz.
Then $g\circ f:[\alpha,\beta]\to[A,B]$ and $h\circ g:[a,b]\to\bb R$ are also absolutely continuous.
\end{lem}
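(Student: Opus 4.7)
The plan is to verify absolute continuity directly from the $\varepsilon$-$\delta$ definition, handling the two composition claims separately, since each exploits a different structural hypothesis.

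First I would dispatch $h\circ g$. Let $L$ be a Lipschitz constant for $h$. Given $\varepsilon>0$, apply absolute continuity of $g$ with tolerance $\varepsilon/L$ to obtain $\delta>0$ such that whenever $(x_i,y_i)$ is a finite collection of pairwise disjoint subintervals of $[a,b]$ with $\sum(y_i-x_i)<\delta$, one has $\sum|g(y_i)-g(x_i)|<\varepsilon/L$. The Lipschitz estimate
\[
\sum|h(g(y_i))-h(g(x_i))|\le L\sum|g(y_i)-g(x_i)|<\varepsilon
\]
then gives absolute continuity of $h\circ g$ immediately. No use of monotonicity is needed here.

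For $g\circ f$, the critical observation is that strict monotonicity of $f$ preserves disjointness: if $\alpha\le x_1<y_1\le x_2<y_2\le\cdots$, then the images $f(x_i)<f(y_i)\le f(x_{i+1})$ form a disjoint collection of subintervals of $[a,b]$. So given $\varepsilon>0$, I would first use absolute continuity of $g$ to produce $\delta_1>0$ governing sums $\sum|g(\beta_i)-g(\alpha_i)|<\varepsilon$ over disjoint intervals in $[a,b]$ of total length less than $\delta_1$; and then use absolute continuity of $f$ to produce $\delta_2>0$ such that disjoint subintervals of $[\alpha,\beta]$ of total length less than $\delta_2$ have $\sum(f(y_i)-f(x_i))<\delta_1$ (where monotonicity has already been used to drop the absolute values). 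Chaining these,
\[
\sum|g(f(y_i))-g(f(x_i))|<\varepsilon
\]
whenever $\sum(y_i-x_i)<\delta_2$, which is the definition of absolute continuity for $g\circ f$.

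There is no real obstacle, since both parts are essentially bookkeeping with the $\varepsilon$-$\delta$ definition; the only point one must not overlook is the role of strict monotonicity of $f$ in guaranteeing that the images under $f$ of disjoint intervals remain disjoint, which is exactly what lets the second chain of inequalities close. Without monotonicity one would be forced to invoke deeper facts such as the Lusin (N) property, but with it the proof reduces to two applications of the definition.
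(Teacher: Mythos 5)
Your argument is correct; the paper itself gives no proof of this lemma, stating only that it ``may be proved in a straightforward manner,'' and your two-part $\varepsilon$--$\delta$ argument is exactly the straightforward proof intended. You also correctly isolate the one non-trivial point, namely that strict monotonicity of $f$ keeps the image intervals $(f(x_i),f(y_i))$ pairwise disjoint so that the absolute continuity of $g$ can be applied to them.
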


Related to this, but apparently less well known, is the question of the absolute continuity of the inverse of an absolutely continuous function.
Spataru \cite{MR2112931} gave an example of a strictly increasing absolutely continuous function whose inverse is not absolutely continuous.
\begin{lem} \label{acinv}
If $f:[a,b]\to\bb R$ is strictly increasing and continuous the following statements hold.
\begin{enumerate}
  \item $f$ is absolutely continuous on $[a,b]$ if and only if the Lebesgue measure of $f(\{x\in[a,b]:f'(x)=\infty\})$ equals $0$.
  \item $f^{-1}$ is absolutely continuous on $[f(a),f(b)]$ if and only if the Lebesgue measure of $\{x\in[a,b]: f'(x)=0\}$ equals $0$.
\end{enumerate}
\end{lem}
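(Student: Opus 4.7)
I would first prove (1) and then deduce (2) by applying (1) to $f^{-1}$, which is also strictly increasing and continuous on $[f(a),f(b)]$. The elementary reciprocal identity for difference quotients of inverse monotone functions shows that $(f^{-1})'(y)$ exists in $[0,\infty]$ iff $f'(f^{-1}(y))$ does, in which case the two are reciprocal. Consequently $\{y:(f^{-1})'(y)=\infty\}=f(\{x:f'(x)=0\})$, so $f^{-1}$ of this set is $\{x:f'(x)=0\}$, and (1) applied to $f^{-1}$ is precisely statement (2).

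For (1), I would invoke the Banach--Zarecki theorem (see Natanson \cite{MR0067952}): a real-valued function on $[a,b]$ is absolutely continuous iff it is continuous, of bounded variation, and has Luzin's $(N)$ property (i.e.\ sends Lebesgue-null sets to Lebesgue-null sets). Continuity is part of the hypothesis, and the bounded-variation condition is automatic since $f$ is monotone on a bounded interval. Thus (1) reduces to the equivalence between $(N)$ and $m(f(\{f'=\infty\}))=0$. The forward direction is immediate: by Lebesgue's theorem on differentiation of monotone functions, $\{f'=\infty\}$ is null, and $(N)$ then forces its image to be null.

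For the reverse direction, assume $m(f(\{f'=\infty\}))=0$ and let $E\subseteq[a,b]$ with $m(E)=0$; I show $m(f(E))=0$. Write $E=E_\infty\cup E_0$ with $E_\infty=E\cap\{f'=\infty\}$, so that $f(E_\infty)$ is null by hypothesis. Stratify the remainder using the upper Dini derivate: $E_0=\big(E_0\cap\{\overline{D}f=\infty\}\big)\cup\bigcup_n\big(E_0\cap\{\overline{D}f\leq n\}\big)$. On each set $E_0\cap\{\overline{D}f\leq n\}$, every point admits arbitrarily short intervals $I$ about it with $|f(I)|\leq(n+\varepsilon)|I|$, and a standard Vitali-covering argument yields $m^*(f(E_0\cap\{\overline{D}f\leq n\}))\leq(n+\varepsilon)\,m^*(E_0\cap\{\overline{D}f\leq n\})=0$. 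The set $E_0\cap\{\overline{D}f=\infty\}$ lies in the Denjoy--Young--Saks exceptional set for monotone $f$ where $\overline{D}f=\infty$ while $\underline{D}f$ is finite (since the classical derivative does not equal $\infty$ here); stratifying it instead by $\underline{D}f\leq m$ and applying the analogous Vitali estimate from the \emph{lower}-derivate side gives a null image. Combining the pieces yields $m(f(E))=0$. The main obstacle is this last step, which relies on a careful bookkeeping of the four Dini derivates of a monotone function, and on the Denjoy--Young--Saks theorem to guarantee that the lower derivate supplies the needed short intervals at every point where the classical derivative fails to exist.
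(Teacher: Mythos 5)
Your proof is correct, and your treatment of part (2) --- applying part (1) to $f^{-1}$ and using the reciprocal relation between the difference quotients of $f$ and of $f^{-1}$ --- is exactly the paper's. For part (1), however, you take a genuinely different route. The paper does not invoke Banach--Zarecki: it first checks that $f$ maps Borel sets to measurable sets, identifies $E\mapsto\bm(f(E))$ with the Lebesgue--Stieltjes measure $\mu$ generated by $f$, recalls that $f$ is absolutely continuous iff $\mu$ is absolutely continuous with respect to $\bm$, and then shows that the singular part in the Lebesgue decomposition $\mu=\mu_a+\mu_s$ vanishes: a carrier $S$ of $\mu_s$ is Lebesgue-null and may be chosen disjoint from $\{f'=\infty\}$, so every point of $S$ admits a finite derived number, and the estimate $\bm^*(f(E))\leq p\,\bm^*(E)$ for sets at each point of which some derived number is at most $p$ (Lemma \ref{BBT}) forces $\mu_s=0$. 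You instead reduce absolute continuity to Luzin's property (N) via Banach--Zarecki and verify (N) directly on an arbitrary null set; the covering estimates you need on each stratum are precisely Lemma \ref{BBT} again, so the combinatorial core of the two arguments is the same. Your appeal to Denjoy--Young--Saks is unnecessary: wherever the extended-real derivative is not $\infty$, either it is finite or the upper and lower derivates disagree, so in either case some derived number is finite, and your two-level stratification by $\overline{D}f$ and then $\underline{D}f$ collapses to the single stratification by ``some derived number $\leq n$'' that the paper uses. What your route buys is that it bypasses both the measurability discussion and the Radon--Nikodym/Lebesgue decomposition, since property (N) involves only outer measures of images of null sets; what it costs is the Banach--Zarecki theorem as an external input, whereas the paper's argument is self-contained modulo standard measure theory.
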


These results are stated as an exercise in Natanson \cite{MR0067952}.
Their proofs require the notion of a derived number and an additional result (Lemma \ref{BBT} below), which in turn, relies on Vitali's covering theorem.
We denote Lebesgue measure by $\bm$ and the corresponding outer measure by $\bm^*$.

Let the function $f:[a,b]\to\bb R$ and the point $x_0\in[a,b]$ be given.
If $n\mapsto x_n\in[a,b]\setminus\{x_0\}$ converges to $x_0$, the sequence $n\mapsto (f(x_n)-f(x_0))/(x_n-x_0)$ has at least one limit point in $[-\infty,\infty]$.
Any such limit point is called a derived number for $f$ at $x_0$.
Clearly, $f$ is differentiable at $x_0$ (allowing $\pm\infty$ as derivatives), if all derived numbers coincide.

The following is Lemma 7.1 in Bruckner, Bruckner, and Thomson \cite{BBT}.
\begin{lem} \label{BBT}
Let $f:[a,b]\to\bb R$ be strictly increasing and let $E$ be a subset of $[a, b]$.
If at each point $x\in E$ there exists a derived number not exceeding $p$, then $\bm^*(f(E)) \leq p \bm^*(E)$.
\end{lem}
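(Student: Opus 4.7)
My plan is to apply Vitali's covering theorem to the image set $f(E)$, using a fine cover of $f(E)$ that is manufactured from the derived-number hypothesis at each point of $E$.

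First I would dispose of a trivial case. Since $f$ is strictly increasing, every difference quotient $(f(x_n)-f(x))/(x_n-x)$ is non-negative, and hence every derived number lies in $[0,\infty]$. If $p<0$ then the hypothesis is vacuous and forces $E=\emptyset$, in which case the conclusion is immediate. So assume $p\geq 0$. Fix $\epsilon,\delta>0$ and choose an open set $U\supset E$ with $\bm(U)<\bm^*(E)+\epsilon$.

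Next I would manufacture the cover. For each $x\in E$ pick a sequence $x_n\to x$ with $x_n\neq x$ along which the difference quotient converges to a derived number $L(x)\leq p$. For all sufficiently large $n$, the closed interval $I_{x,n}$ with endpoints $x$ and $x_n$ is contained in $U$ and satisfies $|f(x_n)-f(x)|\leq (p+\delta)|x_n-x|$. Let $J_{x,n}$ denote the closed interval with endpoints $f(x)$ and $f(x_n)$. Then $f(x)\in J_{x,n}$, $|J_{x,n}|\leq (p+\delta)|I_{x,n}|$, and $|J_{x,n}|\to 0$ as $n\to\infty$, so the family $\{J_{x,n}\}$ is a Vitali cover of $f(E)$. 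Vitali's theorem then yields a countable disjoint subfamily $\{J_k\}$ with $\bm^*(f(E)\setminus\bigcup_k J_k)=0$. The corresponding intervals $I_k\subset U$ must be pairwise non-overlapping, since any overlap would, by strict monotonicity of $f$, produce a common interior point in the associated $J_k$'s. Hence $\sum_k|I_k|\leq \bm(U)\leq \bm^*(E)+\epsilon$, and
$$\bm^*(f(E))\leq \sum_k|J_k|\leq (p+\delta)\sum_k|I_k|\leq (p+\delta)(\bm^*(E)+\epsilon).$$
Letting $\delta\to 0$ and then $\epsilon\to 0$ gives $\bm^*(f(E))\leq p\,\bm^*(E)$.

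The main obstacle is the careful book-keeping around the fact that $f$ is only assumed strictly increasing, not continuous: one must verify that the image intervals $J_{x,n}$ really do form a Vitali cover of $f(E)$ (which hinges on $L(x)\leq p<\infty$ forcing $|J_{x,n}|\to 0$), and that the disjointness of the selected $J_k$ transfers back to non-overlap of the associated $I_k$ with no help from continuity. Once these points are nailed down, the quantitative estimate is the routine Vitali-and-sum computation above.
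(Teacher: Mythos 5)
Your proof is correct. The paper does not actually prove Lemma \ref{BBT} --- it cites it as Lemma 7.1 of Bruckner--Bruckner--Thomson and remarks only that the proof ``relies on Vitali's covering theorem'' --- and your argument (building a fine cover of the image $f(E)$ from the intervals with endpoints $f(x)$ and $f(x_n)$, applying Vitali there, and pulling the disjointness back to the domain via monotonicity, which is the step that correctly avoids any appeal to continuity or absolute continuity of $f$) is precisely that standard argument.
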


\begin{proof}[Proof of Lemma \ref{acinv}]
We will first prove the only if direction of (1), and then the if direction. Finally we will show that (1) and (2) are equivalent.
First note that $f$ is of bounded variation and thus has a finite derivative almost everywhere.
Since $f$ is increasing all derived numbers are non-negative.
Let $A$ be the set of those $x$ where $f'(x)$ exists and is finite, $B$ the set where $f'(x)=\infty$, and $C$ the set where no derivative exists (not even an infinite one).
These sets are pairwise disjoint and their union is $[a,b]$.
We know that $\bm(B)=\bm(C)=0$.

Now assume that $f$ is absolutely continuous.
Then it maps sets of measure $0$ to sets of measure $0$ and, in particular, $\bm(f(B))=0$, completing our first step.

For the second step $\bm(f(B))=0$ is the assumption and our main objective is to show that $\bm(f(C))=0$ but first we need to investigate whether images of measurable sets are measurable.
To this end let $\mc A=\{E:\text{$f(E)$ is Lebesgue measurable}\}$.
It is easy to see that $\mc A$ is a $\sigma$-algebra in $[a,b]$ which contains all relatively open subintervals of $[a,b]$ and hence all its Borel sets.
In particular, the image of any Borel set is Lebesgue measurable.
Thus $E\mapsto \mu(E)=\bm(f(E))$ is a measure defined on the Borel sets of $[a,b]$.
Since $\mu([a,x))=f(x)-f(a)$ we find that $\mu$ is the Lebesgue-Stieltjes measure generated by $f$ (or rather the restriction of this to the Borel sets).
Recall that $f$ is an absolutely continuous function if and only if $\mu$ is absolutely continuous with respect to Lebesgue measure.
By the Radon-Nikodym theorem $\mu=\mu_a+\mu_s$ where $\mu_a$ is absolutely continuous with respect to $\bm$ while $\mu_s$ and $\bm$ are mutually singular.
Thus we need to show that $\mu_s=0$.

Since $\mu_s$ and $\bm$ are mutually singular, there is a set $S\subset[a,b]$ such that $\bm(S)=0=\mu_s([a,b]\setminus S)$.
Since $\mu(B)=0$ and $\bm(C)=0$ we may assume that $B\subset S^c$ and $C\subset S$.
For $n\in\bb N$ let $S_n$ be the set of those $x\in S$ for which there is a derived number for $f$ smaller than $n$.
Then $S=\bigcup_{n=1}^\infty S_n$.
But, by Lemma \ref{BBT}, $\mu_s(S_n)=\mu(S_n)\leq n\bm(S_n)=0$ which implies $\mu_s=0$.
We have now proved the first statement.

To prove that the second statement is equivalent to the first let $g=f^{-1}$.
Then note that $g$ is strictly increasing and continuous and that $g(\{t\in[f(a),f(b)]:g'(t)=\infty\})=\{x\in[a,b]: f'(t)=0\}$.
\end{proof}

\section{Distributions} \label{AppB}
Distributions are linear functionals on a set of test functions\footnote{See Gelfand and Shilov \cite{MR0435831} or H\"ormander \cite{MR1996773} for details going beyond the very basics discussed here.}.
Specifically, in our context, test functions are complex-valued, compactly supported functions defined on $\iOmega$ which have derivatives of all orders.
The space of test functions is denoted by $\mc D(\iOmega)$.
A linear functional $u$ on $\mc D(\iOmega)$ is a distribution, if for every compact set $K\subset\iOmega$ there are constants $C>0$ and $k\in\bb N_0$ such that
\begin{equation}\label{eq:1.1.1}
|u(\phi)|\leq C \sum_{j=0}^k \sup\{|\phi^{(j)}(x)|:x\in K\}
\end{equation}
whenever the test function $\phi$ has its support in $K$.
The set of all distributions on $\iOmega$ is denoted by $\mc D'(\iOmega)$.
It becomes a linear space upon defining $\alpha u_1+\beta u_2$ by $(\alpha u_1+\beta u_2)(\phi)=\alpha u_1(\phi)+\beta u_2(\phi)$ whenever $u_1, u_2\in\mc D'(\iOmega)$ and $\alpha,\beta\in\bb C$.

If $u$ is a distribution then so is $\phi\mapsto -u(\phi')$.
This distribution is called the derivative of $u$ and is denoted by $u'$.
Distributions also have antiderivatives.
To see this fix $\psi\in \mc D(\iOmega)$ with $\int_a^b\psi=1$ so that $\varphi(x)=\int_a^x (\phi-\psi\int_a^b\phi)$ defines a test function in $\mc D(\iOmega)$ whenever $\phi$ is one.
Now, if $u$ is a distribution, define the linear functional $v:\phi\mapsto -u(\varphi)$. It is easy to check that $v$ is a distribution.
Also, since $\int_a^b \phi'=0$, we find that $v'(\phi)=-v(\phi')=u(\phi)$, i.e., $v$ is an antiderivative of $u$.
Two antiderivatives of a distribution differ by only a constant as the following lemma shows.

\begin{lem}[Du Bois-Reymond] \label{dBR}
Suppose the derivative of the distribution $v$ is zero.
Then $v$ is the constant distribution, i.e., there is a constant $C$ such that $v(\phi)=C\int_a^b\phi$ for all $\phi\in\mc D(\iOmega)$.
\end{lem}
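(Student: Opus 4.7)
The plan is to exploit the antiderivative construction that the excerpt has already laid out just above the lemma statement. Fix, as in that construction, a test function $\psi \in \mathcal D(\iOmega)$ with $\int_a^b \psi = 1$, and for an arbitrary test function $\phi \in \mathcal D(\iOmega)$ set
$$\varphi(x) = \int_a^x \Bigl(\phi - \psi\int_a^b \phi\Bigr).$$
I would first verify that $\varphi \in \mathcal D(\iOmega)$: smoothness is immediate, and the compact support comes from the facts that $\phi - \psi\int_a^b\phi$ has compact support in $\iOmega$ and integrates to zero, so $\varphi$ vanishes both to the left of the joint support and to the right of it.

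Next I would compute $\varphi' = \phi - \psi \int_a^b \phi$ and apply the hypothesis $v' = 0$. By definition of the distributional derivative,
$$0 = v'(\varphi) = -v(\varphi') = -v(\phi) + \Bigl(\int_a^b \phi\Bigr) v(\psi).$$
Setting $C = v(\psi)$ (a fixed constant, since $\psi$ was chosen once and for all) yields the desired identity $v(\phi) = C \int_a^b \phi$ for every $\phi \in \mathcal D(\iOmega)$.

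There is no real obstacle here; the work has essentially been done in the paragraph preceding the lemma, where the map $\phi \mapsto \varphi$ was shown to land in $\mathcal D(\iOmega)$. The only point that deserves a sentence of care is independence of $C$ from the auxiliary choice $\psi$, but this is automatic once the formula $v(\phi) = C\int_a^b \phi$ is established, since $C$ is then forced by evaluating against any single test function with non-zero integral.
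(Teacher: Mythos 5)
Your proof is correct: the paper itself states Lemma \ref{dBR} without proof, but the argument you give is exactly the one implicit in the construction of antiderivatives in the paragraph immediately preceding the lemma (the map $\phi\mapsto\varphi$ lands in $\mc D(\iOmega)$ because $\phi-\psi\int_a^b\phi$ integrates to zero, and $v'=0$ applied to $\varphi$ yields $v(\phi)=v(\psi)\int_a^b\phi$). Nothing is missing.
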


A distribution $u$ is called real if $u(\phi)$ is real whenever $\phi$ assumes only real values.
It is called non-negative if $u(\phi)\geq0$ whenever $\phi\geq0$.

We conclude by giving some pertinent examples of distributions.
A complex measure $\mu$ on $[a,b]$ may be identified with the distribution $\phi\mapsto \int_{[a,b]} \phi\ d\mu$.
In particular, if $\mu$ is absolutely continuous with respect to Lebesgue measure $\bm$ and if $f\in L^1(\iOmega)$ is the corresponding Radon-Nikodym derivative, we have the distribution $\phi\mapsto \int_a^b f\phi\ d\textrm{m}$.
The antiderivative of $\phi\mapsto \int_{[a,b]} \phi\ d\mu$ is given by $\phi\mapsto \int_{[a,b]} F\phi\ d\textrm{m}$ where $F(x)=\mu([a,x))$ is a function of bounded variation.

The class $W^{-1,2}(\iOmega)$ consists of the distributions defined by $\phi\mapsto -\int_a^b F \phi'$ where $F\in L^2(\iOmega)$.
Note that the complex measures are a subset of $W^{-1,2}(\iOmega)$.

\bibliographystyle{plain}

\begin{thebibliography}{10}

\bibitem{BBT}
Andrew~M. Bruckner, Judith~B. Bruckner, and Brian~S. Thomson.
\newblock {\em Real Analysis}.
\newblock ClassicalRealAnalysis.com, 2008.
\newblock Second edition.

\bibitem{MR3046408}
Jonathan Eckhardt, Fritz Gesztesy, Roger Nichols, and Gerald Teschl.
\newblock Weyl-{T}itchmarsh theory for {S}turm-{L}iouville operators with
  distributional potentials.
\newblock {\em Opuscula Math.}, 33(3):467--563, 2013.

\bibitem{MR1681462}
Gerald~B. Folland.
\newblock {\em Real analysis}.
\newblock Pure and Applied Mathematics (New York). John Wiley \& Sons, Inc.,
  New York, second edition, 1999.
\newblock Modern techniques and their applications, A Wiley-Interscience
  Publication.

\bibitem{MR0024567}
Tomlinson Fort.
\newblock {\em Finite {D}ifferences and {D}ifference {E}quations in the {R}eal
  {D}omain}.
\newblock Oxford, at the Clarendon Press, 1948.

\bibitem{MR0435831}
I.~M. Gel{\cprime}fand and G.~E. Shilov.
\newblock {\em Generalized functions. {V}ol. 1}.
\newblock Academic Press [Harcourt Brace Jovanovich Publishers], New York, 1964
  [1977].
\newblock Properties and operations, Translated from the Russian by Eugene
  Saletan.

\bibitem{zbMATH02247495}
Don {Hinton}.
\newblock {Sturm's 1836 oscillation results evolution of the theory.}
\newblock In {\em {Sturm-Liouville theory. Past and present. Selected survey
  articles based on lectures presented at a colloquium and workshop in Geneva,
  Italy, September 15--19, 2003 to commemorate the 200th anniversary of the
  birth of Charles Fran\c cois Sturm}}, pages 1--27. Basel: Birkh\"auser, 2005.

\bibitem{MR3162802}
Monika Homa and Rostyslav Hryniv.
\newblock Comparison and oscillation theorems for singular {S}turm-{L}iouville
  operators.
\newblock {\em Opuscula Math.}, 34(1):97--113, 2014.

\bibitem{MR1996773}
Lars H{\"o}rmander.
\newblock {\em The analysis of linear partial differential operators. {I}}.
\newblock Classics in Mathematics. Springer-Verlag, Berlin, 2003.
\newblock Distribution theory and Fourier analysis, Reprint of the second
  (1990) edition [Springer, Berlin; MR1065993 (91m:35001a)].

\bibitem{MR0140759}
Walter Leighton.
\newblock Comparison theorems for linear differential equations of second
  order.
\newblock {\em Proc. Amer. Math. Soc.}, 13:603--610, 1962.

\bibitem{MR0067952}
I.~P. Natanson.
\newblock {\em Theory of functions of a real variable}.
\newblock Frederick Ungar Publishing Co., New York, 1955.
\newblock Translated by Leo F. Boron with the collaboration of Edwin Hewitt.

\bibitem{Picone1910}
Mauro Picone.
\newblock Sui valori eccezionali di un parametro da cui dipende un'equazione
  differenziale lineare ordinaria del second'ordine.
\newblock {\em Annali della Scuola Normale Superiore di Pisa - Classe di
  Scienze}, 11:1--144, 1910.

\bibitem{MR1756602}
A.~M. Savchuk and A.~A. Shkalikov.
\newblock Sturm-{L}iouville operators with singular potentials.
\newblock {\em Mathematical Notes}, 66(6):741--753, 1999.
\newblock Translated from Mat. Zametki, Vol. 66, pp. 897--912 (1999).

\bibitem{MR2664498}
A.~A. Shkalikov and Zh. Ben~Amara.
\newblock Oscillation theorems for {S}turm-{L}iouville problems with
  distribution potentials.
\newblock {\em Vestnik Moskov. Univ. Ser. I Mat. Mekh.}, (3):43--49, 2009.

\bibitem{MR2112931}
Silvia Sp{\u{a}}taru.
\newblock An absolutely continuous function whose inverse function is not
  absolutely continuous.
\newblock {\em Note Mat.}, 23(1):47--49, 2004/05.

\bibitem{Sturm1836}
Sturm.
\newblock Mémoire sur les équations différentielles linéaires du second ordre.
\newblock {\em Journal de Mathématiques Pures et Appliquées}, 1:106--186, 1836.

\bibitem{zbMATH03303390}
C.A. {Swanson}.
\newblock {\em {Comparison and oscillation theory of linear differential
  equations}}.
\newblock Academic Press, 1968.

\end{thebibliography}
\def\cprime{$'$} \def\soft#1{\leavevmode\setbox0=\hbox{h}\dimen7=\ht0\advance
  \dimen7 by-1ex\relax\if t#1\relax\rlap{\raise.6\dimen7
  \hbox{\kern.3ex\char'47}}#1\relax\else\if T#1\relax
  \rlap{\raise.5\dimen7\hbox{\kern1.3ex\char'47}}#1\relax \else\if
  d#1\relax\rlap{\raise.5\dimen7\hbox{\kern.9ex \char'47}}#1\relax\else\if
  D#1\relax\rlap{\raise.5\dimen7 \hbox{\kern1.4ex\char'47}}#1\relax\else\if
  l#1\relax \rlap{\raise.5\dimen7\hbox{\kern.4ex\char'47}}#1\relax \else\if
  L#1\relax\rlap{\raise.5\dimen7\hbox{\kern.7ex
  \char'47}}#1\relax\else\message{accent \string\soft \space #1 not
  defined!}#1\relax\fi\fi\fi\fi\fi\fi}

\end{document}